\theoremstyle{definition}
\newtheorem{theorem}{Theorem}[section]
\newtheorem{definition}[theorem]{Definition}
\newtheorem{remark}[theorem]{Remark}
\newcommand{\N}{\mathbb{N}}
\renewcommand{\P}{\mathcal{P}}
\newcommand{\A}{\mathfrak{A}}
\newcommand{\G}{\mathfrak{G}}
\newcommand{\W}{\mathfrak{W}}
\newcommand{\B}{\mathfrak{B}}
\newcommand{\M}{\mathfrak{M}}
\renewcommand{\baselinestretch}{1.25}
\title{Power indices and minimal winning coalitions}
\author{Werner Kirsch and Jessica Langner}
\begin{document}
\maketitle

\renewcommand{\baselinestretch}{1.0}
\begin{abstract}
The \emph{Penrose-Banzhaf index} and the \emph{Shapley-Shubik index} are
the best-known and the most used tools to measure political power of voters in simple voting games. Most methods to calculate these power indices are based on counting winning coalitions, in particular those coalitions a voter is decisive for. We present a new combinatorial formula how to calculate both indices solely using the set of minimal winning coalitions.
\end{abstract}
\renewcommand{\baselinestretch}{1.25}


\section{Introduction}\label{sec:Introduction}
\thispagestyle{empty}
The theory of power indices is a systematic approach to measure political power in voting systems  (cp. \cite{Taylor1995}, \cite{FeMa1998}). Voting systems are also known as \emph{simple (voting) games} in literature.  The well-known \emph{Penrose-Banzhaf index} \cite{Penrose1946}, \cite{Banzhaf1965} and
\emph{Shapley-Shubik index} \cite{ShSh1954} rely on the concept of decisiveness of voters. On the other hand, the \emph{Deegan-Packel} index \cite{DePa1978} and the \emph{Holler-Packel index} \cite{HoPa1983} are based explicitly on the set of \emph{minimal winning coalitions} ($MWC$s). $MWC$s are those coalitions each voter is decisive for. Particularly, a calculation of power indices is easy to handle in \emph{weighted voting systems}. Here, voting weights are assigned to each voter and a decision threshold is defined. A proposal is accepted if the sum of the voting weights in favor meets or exceeds the given threshold.

Usually, calculation methods are based on listing the set of winning coalitions. In this paper we develop a combinatorial approach how to determine power indices solely using the $MWC$-set. For illustration we use the examples of the Penrose-Banzhaf index and the Shapley-Shubik index. It is known that each voting system (whether it is weighted or not) has got a $MWC$-set and it is completely defined by it. More precisely, each set of voting rules can be quantified by a $MWC$-set. Thus, our approach makes it possible to calculate power indices for each potential $MWC$-set in a rather elegant way. Furthermore, we could systematically calculate each potential constellation of voting power for a given set of voters. This might be useful for an optimization of existing voting systems or to design scientifically based proposals for further voting bodies.

\vspace{5mm}
This paper is organized as follows. In first part we present basic definitions and concepts of the theory of voting power. This section \ref{sec:bdac} is divided in three subsections. In subsection \ref{subsec:vs} voting systems will be defined. The theory of influence on decision in a voting body will be introduced in subsection \ref{subsec:damwc}. In this context minimal winning coalitions and its several properties will be discussed. In the last subsection \ref{subsec:pi} we present the best-known methods for measuring the mentioned decisiveness of voters. The section \ref{sec:Calc} is the main part of this paper. Here, we introduce our approach of a combinatorial calculation of the presented power indices solely using the set of minimal winning coalitions. How these calculation methods work will be illustrated using the example of the European Economic Community of 1958-1972 in section \ref{sec:ex}. The last section \ref{sec:Concl} of this paper contains concluding remarks.
\vspace{5mm}

\section{Basic definitions and concepts}\label{sec:bdac}
\subsection{Voting systems}\label{subsec:vs}
Voting systems consist of a set of voters and voting rules. The voting rules determine wether a proposal is accepted or not.

The \emph{set of voters} of a voting system can be represented by a finite non-empty set $W=\{1,\dots,n\}$. We call each element $w\in W$ a \emph{voter}. A collection of voters represented by a subset $A\subseteq W$ is called a \emph{coalition}. Its \emph{cardinality} $\#A$ is given by the number of voters in the coalition. The set of all coalitions of $W$ is denoted by $\P(W)$ which is called the \emph{power set} of $W$. Its cardinality is $\#\P(W)=2^n$. Additionally, we mention two important coalitions, the \emph{empty coalition} $\emptyset$ and the \emph{grand coalition} $W$.

Voters decide about accepting or rejecting a proposal by a vote in favor or against. Whenever we talk about a coalition we mean the collection of those voters who vote in favor of a given proposal.

Voting rules are reflected in a split of $\P(W)$ in two disjunct parts: The first part consists of those coalitions which can make a proposal pass; the second part consists of those coalitions which can not make a proposal pass. We call the first part the set of \emph{winning coalitions} and it will be denoted with $\G\subset\P(W)$. A coalition $A\not\in\G$ is called a \emph{losing coalition}. We will always assume that the grand coalition is a winning coalition while the empty coalition is a losing one. Moreover, we assume if $A$ is a winning coalition and the coalition $B$ comprises $A$, then $B$ should be winning as well. This property of $\G$ is called \emph{monotonicity}.

\begin{definition}
If $W$ is a finite non-empty set of voters and $\G$ is a monotone subset of $\P(W)$ with $W\in\G$ and $\emptyset\not\in\G$ a \emph{voting system} is a pair $\W:=(W,\G)$.
\end{definition}

In many applications it is obvious that either a coalition $A$ or its complementary coalition $(W\backslash A)$ is losing (or both are losing). A voting system with this property is called a \emph{proper} voting system. Otherwise, it is called \emph{improper}  (see e.g. \cite{FeMa1995}, \cite{FeMa1998}, \cite{FeMa1998b}). In the following we don't have to distinguish between proper and improper voting systems as our results are valid in both situations.

Frequently, voting systems consist of voting rules which assign voting weights to each voter and define a decision threshold. A proposal will be passed if the sum of the weights of the voters, which vote in favor, meets or exceeds the given threshold. These are the so-called \emph{weighted voting systems}.

\begin{definition}
A voting system $\W=(W,\G)$ is said to be \emph{weighted} if a function $g:W\rightarrow[0,\infty)$ and a number $q\in[0,\infty)$ exist with
\begin{eqnarray}
\sum_{w\in A}g(w)\geq q \textnormal{ holds for all }A\in\G\ \textnormal{ and } \sum_{w\in B}g(w)< q \textnormal{ holds for all }B\in(\P(W)\backslash\G)\ .
\end{eqnarray}
$g(w)$ is called the \emph{voting weight of $w$} and $q$ is called the \emph{quota}.
\end{definition}

\subsection{Decisiveness and minimal winning coalitions}\label{subsec:damwc}

An important aspect of political sciences is political power which is also known as \emph{voting power}. Voting power is a mathematical concept which quantifies the influence of a voter on election at a system. Its theory can be traced back to works of Penrose \cite{Penrose1946}, Shapley and Shubik \cite{ShSh1954} and Banzhaf \cite{Banzhaf1965}. If a voter can turn the voting outcome by changing his or her voting behavior (from vote in favor to against or vice versa) then he or she has influence on the voting decision (cp. \cite{Taylor1995}, \cite{FeMa1998} and \cite{Kirsch2004}). This property is known as \emph{decisiveness}. Thus, in a voting system $\W$ a voter $w$ is \emph{decisive} for a coalition $A\in\G$ if $w\in A$ and $(A\backslash \{w\})\not\in\G$. Otherwise, $w$ is said to be \emph{non decisive} for $A$.

Particularly, we consider those winning coalitions each voter in the coalition is decisive: A winning coalition $V\in\G$ is said to be a \emph{minimal winning coalition} $(MWC)$ if $V\backslash\{i\}$ is a losing coalition for each voter $i\in V$.

\begin{definition}\label{def:MWC}
The non-empty subset $\M(\G)$ with
\begin{eqnarray}
\M(\G):=\{V\in\G\mid V\textnormal{ is a }MWC\}
\end{eqnarray}
is called the $MWC$-\emph{set of} $\G$ (respectively $\W$).
\end{definition}

$MWC$-sets have various properties well known in set theory,
combinatorics and discrete mathematics: $MWC$-sets are
\emph{antichains} in $\P(W)$ which are also known as \emph{Sperner
families} (cp. \cite{Anderson1987} and \cite{Engel1997}) in
literature. More precisely, an antichain $\widetilde{\M}$ is a non-empty set of subsets of $W$ such that $X\nsubseteq Y$ and $Y\nsubseteq X$ holds for all $X,Y\in\widetilde{\M}$.

In addition, we observe that each voting system has a unique $MWC$-set due to monotonicity. In \cite{FeMa1998} the authors Felsenthal and Machover remarked that $\W$ is uniquely determined by its assembly $W$ and its $MWC$-set $\M(\G)$. Thus
\begin{eqnarray}
\G=\{A\in\P(W)\mid\exists\ V\in\M(\G):\ V\subseteq A\}.
\end{eqnarray}
This is due to the fact that minimal winning coalitions are just the minimal elements in $\G$ with respect to the partial order $\subseteq$. Also, each set $\G$ meets the conditions of an \emph{upset} or \emph{filter} (cp. \cite{Anderson1987} and
\cite{Engel1997}). Thus, a voting system is completely defined by its $MWC$-set. We call a $MWC$-set a \emph{basis} as well.

By a theorem of Sperner \cite{Sperner1928} on the cardinality of $\M(\G)$ it is known to satisfy:
\begin{eqnarray}
1\leq\#\M(\G)\leq{n\choose{\lfloor\frac{n}{2}\rfloor}}.
\end{eqnarray}

Furthermore, the number of different voting systems for a given number of voters is equal to the corresponding Dedekind number \cite{Dedekind1897} minus 2. According to the definition of $\G$ the two sets $\emptyset$ and $\{\emptyset\}$ are not allowed as $MWC$-set. The number of voting systems with up to eight voters is shown in table 1.

\begin{table}[h]
\begin{center} \label{tab:1}\caption[Number of antichains for a given set of $\#W$ voters.]{Number of antichains for a given set of $\#W$ voters.}
\begin{tabular}{|c||r|}
  \hline
  $\#W$ & Number of antichains\\ \hline\hline
     1 & 1\\\hline
     2 & 4\\\hline
     3 &  18\\\hline
     4 & 166\\\hline
     5 & 7.579\\\hline
     6 & 7.828.352\\\hline
     7 & 2.414.682.040.996\\\hline
     8 & 56.130.437.228.687.557.907.786\\\hline
\end{tabular}
\end{center}
\end{table}

\clearpage
\subsection{Power indices}\label{subsec:pi}
Voting power of each voter can be measured by power indices in terms of influence on decisions \cite{FeMa1998}. Felsenthal and Machover gave a general axiomatic definition of power indices in their papers \cite{FeMa1995} and \cite{FeMa1998b}. In the following we survey the two most popular power indices.
\begin{definition}[Penrose-Banzhaf index]
  \begin{eqnarray}
BS_w:=\#\{C\in\G\mid w\in C,\ (C\backslash\{w\})\not\in\G\}
\end{eqnarray}
is called the \emph{Banzhaf score} of a voter $w$ and
\begin{eqnarray}
PBP_w:=\frac{BS_w}{2^{n-1}}
\end{eqnarray}
is called the \emph{Penrose-Banzhaf power} of $w$. Finally
\begin{eqnarray}\label{def:BI}
PBI_w:=\frac{BS_w}{\sum_{i=1}^nBS_i}
\end{eqnarray}
is called the \emph{Penrose-Banzhaf index} of $w$.
\end{definition}

It is easy to see that $0 \leq PBI_w\leq 1$ and $\sum_{i=1}^n PBI_i=1$. The Penrose-Banzhaf power is equal to the probability a voter is decisive for a coalition. The Penrose-Banzhaf index measures the \emph{a priori} voting power of a voter. This means that the decisiveness of a voter will be measured without any previous knowledge of the single voters. Therefore it is natural to assume that all coalitions are equally likely.

\begin{definition}[Shapley-Shubik index]
\begin{eqnarray}\label{def:SSI}
SSI_w:=\sum_{{S\in\G}\atop{\textnormal{ with $w$ is decisive for } S}}\frac{(n-\#S)!(\#S-1)!}{n!}
\end{eqnarray}
is called the \emph{Shapley-Shubik index} of a voter $w$.
\end{definition}

As above $0 \leq SSI_w\leq 1$ and $\sum_{i=1}^n SSI_i=1$. The Shapley-Shubik index represents the fraction of orderings of voters for which a voter is decisive.

Both the Penrose-Banzhaf index and the Shapley-Shubik index measure the influence of voters in different ways (cp. \cite{Taylor1995}, \cite{FeMa1998} and \cite{LaVa2005}). In many cases they agree but important examples like the US federal system exist where they do not \cite{Taylor1995}. The right choice which index should be used for analysing a voting situation depends on the assumption about the voting behavior of the voters. In situations in which the voters vote completely independently from each other the Penrose-Banzhaf index should be used. Otherwise, if a common belief has influence on the choice of all voters the Shapley-Shubik index should be used (cp. \cite{Straffin1977}, \cite{LaVa2005} and \cite{Kirsch2007}).

Both the Penrose-Banzhaf index and the Shapley-Shubik are based on the decisiveness of voters. Moreover, further power indices exist which are based uniquely on the set of minimal winning coalitions, i.e. the Deegan-Packel index \cite{DePa1978} and the Holler-Packel index \cite{HoPa1983}.

In addition, the vector of the player's power values can be defined as \emph{power profile} concerning the power index under consideration.

We are interested in the set and values of potential power profiles of a given set of voters. It is a fact, that not every arbitrary constellation of voting power is possible (cp. \cite{Kirsch2001}). For example, in a voting system consisting of two voters only two power distributions are possible: Either one voter has got the total power and the other voter has no power \emph{or} both have the same (half) part of power. From section \ref{subsec:damwc} we know that the several $MWC$-sets of a given set of voters define each potential voting system. Thus, we are able to calculate each possible Deegan-Packel profile and Holler-Packel profile. The definitions of the Penrose-Banzhaf index and the Shapley-Shubik index allow us, in principle, to calculate these power indices by inspecting the list of all winning coalitions (cp. \cite{Taylor1995}, \cite{Leech2002} and \cite{Leech2003}). We have developed a new combinatorial approach to calculate the Penrose-Banzhaf index and the Shapley-Shubik index using simple terms which are solely based on the $MWC$-set.

\section{Calculations}\label{sec:Calc}

Firstly, we present a new calculation formula for the Banzhaf score of a voter. Out of this the Penrose-Banzhaf index can easily be determined. In the second part of this section we present a similar calculation method for the Shapley-Shubik index.

\begin{theorem}\label{th:tbnew}
[$BS$-direct-calculation formula] In a voting system $\W$ with $\M(\G)=\{V_1,\dots,V_m\}$ and $\#\M(\G)=m$ we have for each voter $w$
\begin{eqnarray}
BS_w=\sum_{r=1}^m(-1)^{r-1}\sum_{1\leq i_1<\dots<i_r\leq m}
t_{i_1,\dots,i_r}(w)
\end{eqnarray}
\begin{eqnarray} \textnormal{
with
}t_{i_1,\dots,i_r}(w):=\left\{\begin{array}{ll}2^{n-\#\bigcup_{j=1}^rV_{i_j}}
& ,\textnormal{ for }w\in\bigcup_{j=1}^rV_{i_j}, \\ 0 &
,\textnormal{ for } w\not\in\bigcup_{j=1}^rV_{i_j}.
\end{array}\right.
\end{eqnarray}
\end{theorem}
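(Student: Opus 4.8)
The plan is to rewrite the Banzhaf score as a difference of two cardinalities, each of which is the size of a union of ``superset'' families, and then to apply the inclusion--exclusion principle to both unions. First I would introduce, for each $i\in\{1,\dots,m\}$, the family $A_i:=\{C\in\P(W)\mid V_i\subseteq C\}$ of all coalitions containing the minimal winning coalition $V_i$. Since $\G=\{A\in\P(W)\mid\exists\,V_i\in\M(\G):V_i\subseteq A\}$, we immediately get $\G=\bigcup_{i=1}^m A_i$. Starting from $BS_w=\#\{C\in\G\mid w\in C,\ (C\backslash\{w\})\notin\G\}$, I would split $\G$ into the coalitions for which $w$ is decisive and those for which it is not. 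Because $(C\backslash\{w\})\notin\G$ forces $w\in C$ (otherwise $C\backslash\{w\}=C$), and because a winning $C$ with $w\notin C$ already satisfies $C\backslash\{w\}=C\in\G$, the non-decisive part of $\G$ is exactly $\{C\in\G\mid(C\backslash\{w\})\in\G\}$, so that
\[
BS_w=\#\G-\#\{C\in\G\mid (C\backslash\{w\})\in\G\}.
\]

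Next I would identify the subtracted set combinatorially. A coalition $C\in\G$ satisfies $(C\backslash\{w\})\in\G$ precisely when some minimal winning coalition is contained in $C\backslash\{w\}$, that is, when some $V_j$ with $w\notin V_j$ satisfies $V_j\subseteq C$. Writing $Q:=\{j\mid w\notin V_j\}$, this gives $\{C\in\G\mid(C\backslash\{w\})\in\G\}=\bigcup_{j\in Q}A_j$, hence
\[
BS_w=\#\Big(\bigcup_{i=1}^m A_i\Big)-\#\Big(\bigcup_{j\in Q}A_j\Big).
\]
The computational heart is the observation that $\bigcap_{i\in S}A_i=\{C\mid\bigcup_{i\in S}V_i\subseteq C\}$ for every nonempty index set $S$, whose cardinality equals $2^{\,n-\#\bigcup_{i\in S}V_i}$ since the elements outside $\bigcup_{i\in S}V_i$ may be chosen freely. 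Applying inclusion--exclusion to both unions and subtracting, every term indexed by a nonempty $S\subseteq Q$ cancels (it occurs in both expansions with the same sign $(-1)^{\#S-1}$ and the same value), and what survives are exactly the terms indexed by nonempty $S\subseteq\{1,\dots,m\}$ with $S\not\subseteq Q$, i.e.\ with $w\in\bigcup_{i\in S}V_i$. Regrouping the survivors by $r=\#S$ reproduces $\sum_{r=1}^m(-1)^{r-1}\sum_{i_1<\dots<i_r}t_{i_1,\dots,i_r}(w)$, because the side condition $w\in\bigcup_{j}V_{i_j}$ is precisely what separates the nonzero values $t_{i_1,\dots,i_r}(w)=2^{\,n-\#\bigcup_j V_{i_j}}$ from the zero ones.

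I expect the main obstacle to be the careful bookkeeping in the cancellation step: one must check that the two inclusion--exclusion expansions share identical summands on the common index sets $S\subseteq Q$, so that these genuinely cancel, while verifying that no term with $S\not\subseteq Q$ is inadvertently removed. The set-theoretic identifications---especially that $(C\backslash\{w\})\in\G$ is equivalent to $C$ containing some $w$-free minimal winning coalition, and that the winning coalitions with $w\notin C$ are already absorbed into the subtracted family $\bigcup_{j\in Q}A_j$---also deserve explicit justification, although they are routine consequences of monotonicity and of the description of $\G$ through $\M(\G)$.
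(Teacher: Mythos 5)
Your proposal is correct and follows essentially the same route as the paper: you write $BS_w$ as $\#\G$ minus the cardinality of the non-decisive part $\{C\in\G\mid (C\backslash\{w\})\in\G\}$, realize both sets as unions of principal filters generated by the $V_i$ (the second union restricted to the $w$-free minimal winning coalitions), and apply inclusion--exclusion to both, with the terms indexed by $w$-free unions cancelling. The only cosmetic difference is that you index the subtracted union over $Q=\{j\mid w\notin V_j\}$ directly, whereas the paper keeps all $m$ indices and sets the corresponding filters $\B'_{V_i}$ to $\emptyset$ when $w\in V_i$; the cancellation bookkeeping you describe is exactly the paper's term-by-term difference $t_{i_1,\dots,i_r}(w)$.
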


\begin{proof}
The value of $BS_w$ is equal to the cardinality of the subset
\begin{eqnarray}\label{def:A0w}
\A_{0w}:=\{A\in\G\mid w\in A,\ (A\backslash\{w\})\not\in\G\}.
\end{eqnarray}
This is the \emph{set of winning coalitions for which $w$ is decisive}. We can construct $\A_{0w}$ by using the following sets:
\begin{itemize}
  \item $\A_w:=\{A\in\G\mid w\in A\}$ is called the \emph{set of winning coalitions including $w$},
  \item $\A_{\not w}:=\{A\in\G\mid w\not\in A\}$ is called the \emph{set of winning coalitions excluding $w$},
  \item $\A_{1w}:=\{A\in\G\mid w\in A,\ (A\backslash\{w\})\in\G\}$ is called the \emph{set of winning coalitions for which $w$ is not decisive}.
\end{itemize}
$\#\A_{0w}$, $\#\A_w$, $\#\A_{\not w}$ and $\#\A_{1w}$ denote the cardinality of the respective set. \\ Obviously, $\A_w=\A_{0w}\cup\A_{1w}$ with $\A_{0w}\cap\A_{1_w}=\emptyset$, so $\A_{0w}=\A_w\backslash\A_{1w}$. Thus, we obtain
\begin{eqnarray}
\A_{0w}&=&\G\backslash(\A_{\not w}\cup\A_{1w})\quad\textnormal{with}\quad\A_{\not w}\cap\A_{1w}=\emptyset.
\end{eqnarray}
The sets $\G$ and $(\A_{\not w}\cup\A_{1w})$ can be constructed by
using \emph{principal filters}, i.e. for $A\in\P(W)$ $\B_A:=\{B\in\P(W)\mid
B\supseteq A\}$ is called the \emph{principal filter} of $A$. Let
$b_A:=\#\B_A$ denotes the cardinality of $\B_A$. Clearly, $b_A=2^{n-\#A}$. For $\#\M(\G)=m$ we obtain
\begin{eqnarray}\nonumber
\G&=&\{A\in \P(W)\mid\exists\ V\in\M(\G):\ V\subseteq
A\}\\&=&\bigcup_{i=1}^m\B_{V_i}
\end{eqnarray}
and
\begin{eqnarray}\nonumber
(\A_{\not w}\cup\A_{1w})&=&\left\{A\in\G\mid \left(w\not\in
A\right)\vee\left(\left(w\in
A\right)\wedge\left((A\backslash\{w\})\in\G\right)\right)\right\}\\\nonumber&=&\{A\in\P(W)\mid\exists
V\in\M(\G):V\subseteq A,\ w\not\in V\}\\&=&\bigcup_{i=1}^m\B'_{V_i}
\end{eqnarray}
with
\begin{eqnarray}
\B'_{V_i}:=\left\{\begin{array}{cl}\B_{V_i} & ,\ w\not\in
V_i,\\\emptyset&,\ w\in V_i.\end{array}\right.
\end{eqnarray}
Thus (cp. \cite{DuSh1979}, \cite{FeMa1998}),
\begin{eqnarray}
\A_{0w}&=&\G\backslash(\A_{\not
w}\cup\A_{1w})\\&=&\bigcup_{i=1}^m\B_{V_i}\backslash\bigcup_{i=1}^m\B'_{V_i}.
\end{eqnarray}

We can obtain the cardinality $\#\A_{0w}$ via the \emph{inclusion-exclusion principle} \cite{Steger2001}: For finite sets $A_1,\dots,A_n$ we have
\begin{eqnarray}\label{th:inexprinc}
\#\bigcup_{i=1}^nA_i=\sum_{r=1}^n(-1)^{r-1}\sum_{1\leq
i_1<\dots<i_r\leq n}\#\bigcap_{j=1}^rA_{i_j}.
\end{eqnarray}
For example, for two finite sets $A\,,B$ we have
\begin{eqnarray*}
\#(A\cup B)=\#A+\#B-\#(A\cap B)
\end{eqnarray*}
and for three finite sets $A\,,B\,,C$ we have
\begin{eqnarray*}
\#(A\cup B\cup C)&=&\#A+\#B+\#C\\&&-\left(\#(A\cap B)+\#(A\cap C)+\#(B\cap C)\right)\\&&+\#(A\cap B\cap C).
\end{eqnarray*}

To continue we need the cardinality of $\bigcap_{i=1}^m\B_{V_i}$. We have
\begin{eqnarray}\nonumber
\bigcap_{i=1}^m\B_{V_i}&=&\bigcap_{i=1}^m\{B\in\P(W)\mid B\supseteq V_i\}\\ &=& \{B\in\P(W)\mid B\supseteq \bigcup_{i=1}^m V_i\}\\ &=& \B_{\bigcup_{i=1}^mV_i}
\end{eqnarray}
and
\begin{eqnarray}\nonumber
\#\B_{\bigcup_{i=1}^mV_i}=2^{n-\#\bigcup_{i=1}^mV_i}.
\end{eqnarray}

Hence,
\begin{eqnarray}\nonumber
\#\bigcup_{i=1}^m\B_{V_i}&=&\sum_{r=1}^m(-1)^{r-1}\sum_{1\leq
i_1<\dots<i_r\leq m}\#\bigcap_{j=1}^r\B_{V_{i_j}}\\
&=&\sum_{r=1}^m(-1)^{r-1}\sum_{1\leq
i_1<\dots<i_r\leq m}2^{n-\#\bigcup_{j=1}^rV_{i_j}}.
\end{eqnarray}

We conclude
\begin{eqnarray}\nonumber
BS_w&=&\#\A_{0w}\\\nonumber
&=&\#\left(\bigcup_{i=1}^m\B_{V_i}\backslash\bigcup_{i=1}^m\B'_{V_i}\right)\\\nonumber
&=&\#\bigcup_{i=1}^m\B_{V_i}-\#\bigcup_{i=1}^m\B'_{V_i}\\\nonumber
&=&\left(\sum_{r=1}^m(-1)^{r-1}\sum_{1\leq i_1<\dots<i_r\leq
m}\#\bigcap_{j=1}^r\B_{V_{i_j}}\right)-\left(\sum_{r=1}^m(-1)^{r-1}\sum_{1\leq
i_1<\dots<i_r\leq m}\#\bigcap_{j=1}^r\B'_{V_{i_j}}\right)\\\nonumber
&=&\left(\sum_{r=1}^m(-1)^{r-1}\sum_{1\leq i_1<\dots<i_r\leq
m}\#\B_{\bigcup_{j=1}^r{V_{i_j}}}\right)-\left(\sum_{r=1}^m(-1)^{r-1}\sum_{1\leq
i_1<\dots<i_r\leq m}\#\B'_{\bigcup_{j=1}^r{V_{i_j}}}\right)\\&=&
\sum_{r=1}^m(-1)^{r-1}\sum_{1\leq i_1<\dots<i_r\leq
m}\underbrace{\left(\#\B_{\bigcup_{j=1}^rV_{i_j}}-{\#\B'_{\bigcup_{j=1}^rV_{i_j}}}\right)}_{=t_{i_1,\dots,i_r}(w)}.
\end{eqnarray}
To compute the term $t_{i_1,\dots,i_r}(w)$ we have to distinguish the following cases:
\begin{enumerate}
    \item If $w\in V_{i_j}$ for some $j$, thus $w\in\bigcup_{j=1}^rV_{i_j}$, then $\bigcap_{j=1}^r\B'_{V_{i_j}}=\emptyset$ and $\#\B'_{\bigcup_{j=1}^rV_{i_j}}=0$ according to assumption.

    \item If $w\not\in V_{i_j}$ for any $j$, thus $w\not\in\bigcup_{j=1}^rV_{i_j}$, then $\bigcap_{j=1}^r\B'_{V_{i_j}}=\bigcap_{j=1}^r\B_{V_{i_j}}$.
\end{enumerate}
We obtain
\begin{eqnarray}
t_{i_1,\dots,i_r}(w)=\left\{\begin{array}{cl}2^{n-\#\bigcup_{j=1}^rV_{i_j}} &,\ w\in\bigcup_{j=1}^rV_{i_j},\\ 0&,\ w\not\in\bigcup_{j=1}^rV_{i_j}\end{array}\right.
\end{eqnarray}
which completes the proof of theorem \ref{th:tbnew}.
\end{proof}

In an analogous manner, we can calculate the Shapley-Shubik index with the combinatorial model as above. For comparison, the Banzhaf score is characterized by allocating the value 1 for each coalition for which a voter is decisive. On the other hand, the Shapley-Shubik index allocates the value $\psi_{SSI}(A)=\frac{(n-\#A)!(\#A-1)!}{n!}$ for each coalition $A$ for which a voter is decisive.

Hence, the Shapley-Shubik index can be calculated with
\begin{eqnarray}
SSI_w=\sum_{A\in\A_{0w}}\psi_{SSI}(A)
\end{eqnarray}
with $\A_{0w}$ as defined in (\ref{def:A0w}).

\begin{theorem}\label{th:ssinew} [$SSI$-direct-calculation formula]. In a voting system $\W$ with $\M(\G)=\{V_1,\dots,V_m\}$ and $\#\M(\G)=m$ for each voter $w$ we have
\begin{eqnarray}
SSI_w=\sum_{r=1}^m(-1)^{r-1}\sum_{1\leq i_1<\dots<i_r\leq m}
s_{i_1,\dots,i_r}(w)
\end{eqnarray}
\begin{eqnarray} \textnormal{
with
}s_{i_1,\dots,i_r}(w):=\left\{\begin{array}{ll}\frac{1}{\#\bigcup_{j=1}^rV_{i_j}}
& ,\textnormal{ for }w\in\bigcup_{j=1}^rV_{i_j}, \\ 0 &
,\textnormal{ for } w\not\in\bigcup_{j=1}^rV_{i_j}.
\end{array}\right.
\end{eqnarray}
\end{theorem}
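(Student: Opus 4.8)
The plan is to mirror the proof of Theorem~\ref{th:tbnew} almost verbatim, replacing the counting measure $\#$ by the additive set function $\mu(S):=\sum_{A\in S}\psi_{SSI}(A)$, and then to isolate the one genuinely new ingredient, namely the evaluation of $\mu$ on a principal filter. First I would start from the identity $SSI_w=\sum_{A\in\A_{0w}}\psi_{SSI}(A)$ already recorded before the statement, together with the set-theoretic decomposition $\A_{0w}=\bigcup_{i=1}^m\B_{V_i}\backslash\bigcup_{i=1}^m\B'_{V_i}$ established in the proof of Theorem~\ref{th:tbnew}. The crucial observation is that this decomposition is purely set-theoretic and does not depend on which measure we put on $\P(W)$; since $\mu$ is additive over disjoint unions, the inclusion-exclusion principle (\ref{th:inexprinc}) holds for $\mu$ exactly as for cardinality.

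Hence every line of the computation in Theorem~\ref{th:tbnew} carries over with $\#$ replaced by $\mu$, yielding
\begin{eqnarray*}
SSI_w=\sum_{r=1}^m(-1)^{r-1}\sum_{1\leq i_1<\dots<i_r\leq m}\left(\mu(\B_{\bigcup_{j=1}^rV_{i_j}})-\mu(\B'_{\bigcup_{j=1}^rV_{i_j}})\right),
\end{eqnarray*}
so that $s_{i_1,\dots,i_r}(w)=\mu(\B_{\bigcup_{j=1}^rV_{i_j}})-\mu(\B'_{\bigcup_{j=1}^rV_{i_j}})$. The case distinction on whether $w\in\bigcup_{j=1}^rV_{i_j}$ is then identical to the one in Theorem~\ref{th:tbnew}: if $w$ lies in the union, then $\B'=\emptyset$ and only $\mu(\B_{\bigcup V_{i_j}})$ survives, whereas if $w$ is outside the union, then $\B'=\B$ and the difference vanishes.

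The one step that is not a transcription, and the heart of the matter, is to show that for any non-empty $U\subseteq W$ with $u:=\#U$ one has $\mu(\B_U)=\tfrac{1}{u}$. I would compute this by grouping the coalitions $A\supseteq U$ according to their cardinality $k=\#A$: there are $\binom{n-u}{k-u}$ such coalitions, each carrying weight $\psi_{SSI}(A)=\tfrac{(n-k)!(k-1)!}{n!}$, so
\begin{eqnarray*}
\mu(\B_U)=\sum_{k=u}^n\binom{n-u}{k-u}\frac{(n-k)!(k-1)!}{n!}=\frac{(n-u)!(u-1)!}{n!}\sum_{k=u}^n\binom{k-1}{u-1}.
\end{eqnarray*}
The inner sum collapses by the hockey-stick identity to $\binom{n}{u}$, and the prefactor then simplifies to $\tfrac{1}{u}$. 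Substituting $u=\#\bigcup_{j=1}^rV_{i_j}$ gives $s_{i_1,\dots,i_r}(w)=\tfrac{1}{\#\bigcup_{j=1}^rV_{i_j}}$ in the relevant case, which completes the proof. I expect the binomial evaluation $\mu(\B_U)=1/u$ to be the only real obstacle; everything else is dictated by the structure already set up for the Banzhaf score, the key conceptual point being merely that inclusion-exclusion is valid for any additive weighting and not just for counting.
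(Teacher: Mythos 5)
Your proposal is correct and takes essentially the same route as the paper's own proof: the paper likewise reuses the decomposition $\A_{0w}=\bigcup_{i=1}^m\B_{V_i}\backslash\bigcup_{i=1}^m\B'_{V_i}$ from Theorem \ref{th:tbnew}, applies inclusion-exclusion to the $\psi_{SSI}$-weighted sums, and evaluates $\sum_{C\in\B_A}\psi_{SSI}(C)=\frac{1}{\#A}$ as the key new ingredient. Your hockey-stick evaluation is exactly the paper's Pascal's-triangle telescoping sum in different clothing, and your explicit observation that inclusion-exclusion holds for any additive set function $\mu$ is the point the paper leaves implicit in its parenthetical remark about applying (\ref{th:inexprinc}) with induction.
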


\begin{proof}
As above, we can construct $\A_{0w}$ from the $MWC$-set by use of principal filters. For this we can apply the inclusion-exclusion principle as well. (We get the proof by application of (\ref{th:inexprinc}) and complete induction.) We obtain for a coalition $A$ with $\#A=k$

\begin{eqnarray}\nonumber
\sum_{C\in\B_A}\psi_{SSI}(C)&=&\sum_{C\in\B_A}\frac{(n-\#C)!(\#C-1)!}{n!}\\\nonumber
&=&\sum_{\ell=k}^{n}{{n-k}\choose{\ell-k}}\frac{(n-\ell)!(\ell-1)!}{n!}\\\nonumber
&=&\frac{1}{k}\frac{1}{{n\choose k}}\sum_{\ell=k}^{n}\frac{(\ell-1)!}{(\ell-k)!(k-1)!}\\\label{rem:beforepastri}
&=&\frac{1}{k}\frac{1}{{n\choose k}}\sum_{\ell=k}^{n}{\ell-1\choose k-1}\\\label{rem:pascaltriangle}
&=&\frac{1}{k}\frac{1}{{n\choose k}}\left(1+\underbrace{\sum_{\ell=k+1}^{n}\left({\ell\choose k}-{\ell-1\choose k}\right)}_{\textnormal{telescope sum}}\right)\\\nonumber
&=&\frac{1}{k}\frac{1}{{n\choose k}}\left(1+{n\choose k}-1\right)\\\label{rem:1durchk}
&=&\frac{1}{k}.
\end{eqnarray}

For the step from (\ref{rem:beforepastri}) to (\ref{rem:pascaltriangle}) we used \emph{Pascal's Triangle} (cp. \cite{Steger2001}): For any $n,k\in\N$ with $n>k$ we gain ${n\choose k}={n-1\choose k-1} +{n-1\choose k}$. Hence,

\begin{eqnarray}\nonumber
SSI_w&=&\sum_{A\in\A_{0w}}\psi_{SSI}(A)\\\nonumber
&=&\sum_{A\in(\bigcup_{i=1}^m\B_{V_i}\backslash\bigcup_{i=1}^m\B'_{V_i})}\psi_{SSI}(A)\\\nonumber
&=&\sum_{A\in\bigcup_{i=1}^m\B_{V_i}}\psi_{SSI}(A)-\sum_{A\in\bigcup_{i=1}^m\B'_{V_i}}\psi_{SSI}(A)\\\nonumber
&=&\left(\sum_{r=1}^m(-1)^{r-1}\sum_{1\leq
i_1<\dots<i_r\leq m}\sum_{A\in\bigcap_{j=1}^r\B_{V_{i_j}}}\psi_{SSI}(A)\right)\\\nonumber&&-\left(\sum_{r=1}^m(-1)^{r-1}\sum_{1\leq
i_1<\dots<i_r\leq m}\sum_{A\in\bigcap_{j=1}^r\B'_{V_{i_j}}}\psi_{SSI}(A)\right)\\&=& \sum_{r=1}^m(-1)^{r-1}\sum_{1\leq
i_1<\dots<i_r\leq m}\left(\underbrace{\sum_{A\in\B_{\bigcup_{j=1}^rV_{i_j}}}\psi_{SSI}(A)-\sum_{A\in\B'_{\bigcup_{j=1}^rV_{i_j}}}\psi_{SSI}(A)}_{=s_{i_1,\dots,i_r}(w)}\right)
\end{eqnarray}

To compute the term $s_{i_1,\dots,i_r}(w)$ we have to distinguish the two cases as above. With (\ref{rem:1durchk}) we gain
\begin{eqnarray}
\sum_{A\in\B_{\bigcup_{j=1}^rV_{i_j}}}\psi_{SSI}(A)=\frac{1}{\#\bigcup_{j=1}^rV_{i_j}}
\end{eqnarray}

Finally, we obtain
\begin{eqnarray}
s_{i_1,\dots,i_r}(w)=\left\{\begin{array}{cl}\frac{1}{\#\bigcup_{j=1}^rV_{i_j}}&,\ w\in\bigcup_{j=1}^rV_{i_j},\\0&,\ w\not\in\bigcup_{j=1}^rV_{i_j}.\end{array}\right.
\end{eqnarray}
This completes the proof of theorem \ref{th:ssinew}.
\end{proof}

\section{Example}\label{sec:ex}
Using the simple example of the European Economic Community (EEC) of 1958-1972 we illustrate how these calculation methods work.
\begin{remark}
The EEC consisted of the six countries France (F), Germany (G), Italy (I), Belgium (B), Netherlands (N) and Luxembourg (L). Each country was assigned a voting weight as follows: (F,4), (G,4), (I,4), (B,2), (N,2) and (L,1). A proposal was accepted if the sum of the voting weights of the states voting in favor met or exceeded a quota of 12.
\end{remark}
This system can also be described by its four $MWC$s. We have the $MWC$-set
\begin{eqnarray}
\M_\textnormal{EEC}=\big\{\{F,G,I\},\ \{F,G,B,N\},\ \{F,I,B,N\},\ \{G,I,B,N\}\big\}.
\end{eqnarray}

We survey $\M_\textnormal{EEC}$ to calculate the Penrose-Banzhaf index and the Shapley-Shubik index using the example of France. To determine the values of the respective indices we have to consider each potential union of $MWC$s. These are a total of $2^4-1=15$ coalitions. If France takes part of a union of $MWC$s we either have to add or to subtract the value of the term $2^{n-\#\bigcup_{j=1}^rV_{i_j}}$ in the case of Penrose-Banzhaf or $\frac{1}{\#\bigcup_{j=1}^rV_{i_j}}$ in the case of Shapley-Shubik. Here, $\bigcup_{j=1}^rV_{i_j}$ stands for the union of some $V_{i_j}\in\M_\textnormal{EEC}$ and $\#\bigcup_{j=1}^rV_{i_j}$ stands for its cardinality. For example,  $\#\bigcup_{1}^1\{F,G,I\}=3$ and $\#\big\{\{F,G,I\}\cup\{F,G,B,N\}\big\}=\#\{F,G,I,B,N\}=5$. On the other hand, if France does not take part of a union we have to add or subtract zero.

The calculation consists of four steps because we have four $MWC$s. First, we consider each $MWC$ on its own and sum up the particular values of the respective term. The terms $R$ and $S$ stand for temporary remainders.
\begin{description}
  \item[Step one $BS_F$.]
  \begin{eqnarray}
  BS_F=2^{6-3}+2\cdot2^{6-4}+R_1=16+R_1.
  \end{eqnarray}
  \item[Step one $SSI_F$.]
  \begin{eqnarray}
  SSI_F=\frac{1}{3}+2\cdot\frac{1}{4}+S_1=\frac{5}{6}+S_1.
  \end{eqnarray}
\end{description}

On second step we consider each union of exact two $MWC$s. We subtract the particular values from the results of step one. Thus,
\begin{description}
  \item[Step two $BS_F$.]
  \begin{eqnarray}
  BS_F=16-\big(6\cdot2^{6-5}\big)+R_2=4+R_2.
  \end{eqnarray}
  \item[Step two $SSI_F$.]
  \begin{eqnarray}
  SSI_F=\frac{5}{6}-\big(6\cdot\frac{1}{5}\big)+S_2=-\frac{11}{30}+S_2.
  \end{eqnarray}
\end{description}

After this, on third step we consider each union of exact three $MWC$s. The respective values must be added to the results of step two. We obtain

\begin{description}
  \item[Step three $BS_F$.]
  \begin{eqnarray}
  BS_F=4+\big(4\cdot2^{6-5}\big)+R_3=12+R_3.
  \end{eqnarray}
  \item[Step three $SSI_F$.]
  \begin{eqnarray}
  SSI_F=-\frac{11}{30}+\big(4\cdot\frac{1}{5}\big)+S_3=\frac{13}{30}+S_3.
  \end{eqnarray}
\end{description}

On last step we consider the union of all four $MWC$s. The respective value must be subtracted from the result of step three.

\begin{description}
  \item[Final step $BS_F$.]
  \begin{eqnarray}
  BS_F=12-2^{6-5}=10.
  \end{eqnarray}
  \item[Final step $SSI_F$.]
  \begin{eqnarray}
  SSI_F=\frac{13}{30}-\frac{1}{5}=\frac{7}{30}.
  \end{eqnarray}
\end{description}

To calculate the Penrose-Banzhaf index of France we have to determine the Banzhaf scores of the remaining countries. Thus, we obtain $BS_F=BS_G=BS_I=10$, $BS_B=BS_N=6$ and $BS_L=0$. Hence, the final results are
\begin{eqnarray}
PBI_F=\frac{5}{21} \textnormal{ and } SSI_F=\frac{7}{30}.
\end{eqnarray}

\section{Conclusions}\label{sec:Concl}

In this paper we presented a new approach for the calculation of the two most popular power indices, the Penrose-Banzhaf index and the Shapley-Shubik index. For this we developed a combinatorial calculation method solely based on the set of minimal winning coalitions. This approach can be used to calculate the distribution of voting power in any arbitrary voting system in an easy way. It does not require several sets of voting rules to create each potential voting systems. Additionally, this method might also be used as model for calculating other power indices.

In comparison with existing calculation methods which are based on listing winning coalitions we predict that our approach might require more time for calculations in more complex voting system. The main purpose of the presented work is not to improve the existing calculation methods. Rather, it can be used for a systematically determining of several power profiles of a given set of voters. One aim might be to determine the complete set of potential power profiles. This might be useful for optimizing existing systems or for designing new voting bodies.

\addcontentsline{toc}{section}{References}


\vspace{5mm}
\emph{Werner Kirsch.}\\Fakultät für Mathematik und Informatik, FernUniversität Hagen, D-58095 Hagen, Germany.\\ \emph{Email:} werner.kirsch@fernuni-hagen.de\vspace{2.5mm}\\
\emph{Jessica Langner.}\\ Fakultät für Mathematik, Ruhr-Universität Bochum, D-44780 Bochum, Germany.\\ \emph{Email:} jessica.langner@ruhr-uni-bochum.de
\label{LastPage}
\clearpage
\end{document}